\theoremstyle{plain}
\theoremstyle{definition}
\newtheorem{theorem}{Theorem}[section]
\newtheorem{corollary}[theorem]{Corollary}
\newtheorem{definition}[theorem]{Definition}
\newtheorem{lemma}[theorem]{Lemma}
\newtheorem{proposition}[theorem]{Proposition}
\theoremstyle{remark}
\newtheorem{example}[theorem]{Example}
\newtheorem{remark}[theorem]{Remark}
\newcommand{\area}{\mathsf{area}}
\newcommand{\dinv}{\mathsf{dinv}}
\newcommand{\inv}{\mathsf{inv}}
\newcommand{\bounce}{\mathsf{bounce}}
\numberwithin{equation}{section}
\title{$e$-positivity of vertical strip LLT polynomials}
\author[M. D'Adderio]{Michele D'Adderio}
\address{Universit\'e Libre de Bruxelles (ULB), D\'epartement de Math\'ematique, Boulevard du Triomphe, B-1050 Bruxelles, Belgium}\email{mdadderi@ulb.ac.be}
\begin{document}
	
\dedicatory{Dedicated to Adriano Garsia on the occasion of his 90th birthday}

\maketitle

\begin{abstract}
In this article we prove the $e$-positivity of $G_{\mathbf{\nu}}[X;q+1]$ when $G_{\mathbf{\nu}}[X;q]$ is a vertical strip LLT polynomial. This property has been conjectured in \cite{Alexandersson_Panova_cycles} and \cite{Garsia_Haglund_Qiu_Romero}, and it implies several $e$-positivities conjectured in those references and in \cite{Bergeron_Open_Questions}.
	
We make use of a result of Carlsson and Mellit \cite{Carlsson-Mellit-ShuffleConj-2015} that shows that a vertical strip LLT polynomial can be obtained by applying certain compositions of operators of the Dyck path algebra to the constant $1$. Our proof gives in fact an algorithm to expand these symmetric functions in the elementary basis, and it shows, as a byproduct, that these compositions of operators are actually multiplication operators.
\end{abstract}

\tableofcontents

\section*{Introduction}

In \cite{Bergeron_Open_Questions} Bergeron conjectured that several symmetric functions $G[X;q]$ arising from the theory of Macdonald polynomials have the property that $G[X;q+1]$ is \emph{$e$-positive}, i.e.\ the coefficients of the expansion of $G[X;q+1]$ in the elementary symmetric function basis are in $\mathbb{N}[q,t]$. Later similar conjectures have been made in \cite{Garsia_Haglund_Qiu_Romero}. Many of these conjectures would follow easily if that same property were true for vertical strip LLT polynomials. This last property has been conjectured both in \cite{Alexandersson_Panova_cycles} and \cite{Garsia_Haglund_Qiu_Romero}, and it was observed in \cite{Alexandersson_Panova_cycles} that it has an interesting parallel with a famous $e$-positivity conjecture about the \emph{chromatic symmetric functions} introduced by Shareshian and Wachs in \cite{Shareshian_Wachs_Original}. Notice also that in \cite{Alexandersson_Panova_cycles}, \cite{Garsia_Haglund_Qiu_Romero} and \cite{Alexandersson_lollipop} some special cases have been proved. 

The LLT polynomials were introduced by Lascoux, Leclerc and Thibon in \cite{LLT_Original}, and they can be seen as a $q$-deformation of products of skew Schur functions. These symmetric functions $G_{\mathbf{\nu}}[X;q]$ are associated to a tuple $\mathbf{\nu}$ of skew Young diagrams. It turns out that many of the symmetric functions arising in the study of modified Macdonald polynomials exhibit a natural expansion in LLT polynomials with coefficients in $\mathbb{N}[q,t]$. In fact, a lot of these expansions only involve vertical strip LLT polynomials.

When all the skew shapes in $\mathbf{\nu}$ are (continuous) vertical strips of cells, we say that $G_{\mathbf{\nu}}[X;q]$ is a \emph{vertical strip LLT polynomial}. The main result of this article is the proof of the $e$-positivity of $G_{\mathbf{\nu}}[X;q+1]$ when $G_{\mathbf{\nu}}[X;q]$ is a vertical strip LLT polynomial. In fact our proof gives an algorithm to compute the expansion of vertical strip LLT polynomials in the elementary symmetric function basis.

The main ingredient of our proof is a result of Carlsson and Mellit in \cite{Carlsson-Mellit-ShuffleConj-2015}, which states that these symmetric functions can be computed by applying a certain composition of operators coming from their \emph{Dyck path algebra} to the constant function $1$. As a corollary of our proof, we get that these compositions of operators are actually multiplication operators. We notice here that this fact is probably related to the same property conjectured by Bergeron in \cite{Bergeron_Open_Questions} for the specialization at $t=1$ of related symmetric functions, though we do not discuss this further in this article.

As a corollary of our main result, we get many of the $e$-positivity conjectured in \cite{Bergeron_Open_Questions}, \cite{Alexandersson_Panova_cycles} and \cite{Garsia_Haglund_Qiu_Romero}.
But we want to stress here that not all the $e$-positivities observed in \cite{Bergeron_Open_Questions}, \cite{Alexandersson_Panova_cycles} and \cite{Garsia_Haglund_Qiu_Romero} follow from our results. Moreover, in
\cite{Alexandersson_lollipop} and \cite{Garsia_Haglund_Qiu_Romero} the authors conjecture explicit formulas for the expansion in the elementary symmetric functions basis of $G_{\mathbf{\nu}}[X;q+1]$ when $G_{\mathbf{\nu}}[X;q]$ is a vertical strip LLT polynomial. We do not say anything about these formulas, though it is conceivable that a deeper analysis of our algorithmic proof could yield new insights into these problems. We leave these considerations for future investigations.

\medskip

The paper is organized in the following way. In Section~1 we introduce some notation from symmetric function theory, while in Section~2 we define vertical strip LLT polynomials and we associate to them certain Schr\"{o}der paths. In Section~3 we introduce the operators of the Dyck path algebra, and we collect some identities that we are going to use in the sequel. In Section~4 we define the path operators corresponding to the Schr\"{o}der paths defined in Section~2, and we state the theorem of Carlsson and Mellit. In Section~5 we prove our main results. Finally in Section~6 we discuss some consequences.

\section{Symmetric function notations}

In this section we limit ourselves to introduce the necessary notation to state our results. We refer to \cite{Stanley-Book-1999} and \cite{Haglund-Book-2008} for the basic symmetric function theory tools that we use freely in the sequel.

We denote by $\Lambda$ the algebra of symmetric functions in the variables $x_1,x_2,\dots$ with coefficients in the field $\mathbb{Q}(q,t)$. We denote by $e_k$ the \emph{elementary symmetric function} of degree $k$, and given a partition $\mu=(\mu_1,\mu_2,\dots)$ we set $e_\mu=e_{\mu_1}e_{\mu_2}\cdots$. We use similar notations for the \emph{power symmetric functions} $p_k$'s.

It is well-known that both $\{p_\mu\}_\mu$ and $\{e_\mu\}_\mu$ are bases of $\Lambda$.

\begin{definition}
	We say that a symmetric function $f\in \Lambda$ is \emph{$e$-positive} if the coefficients of the expansion of $f$ in the basis $\{e_\mu\}_\mu$ are all in $\mathbb{N}[q,t]$.
\end{definition}

We will make also use of the \emph{plethystic notation}. With this notation we will be able to add and subtract alphabets, which will be represented as sums of monomials $X = x_1 + x_2 + x_3+\cdots $. Then, given a symmetric function $f$, we denote by $f[X]$ the expansion of $f$ in the basis $\{p_\mu\}_\mu$ with $p_k$ replaced by $x_{1}^{k}+x_{2}^{k}+x_{3}^{k}+\cdots$, for all $k$. More generally, given any expression $Q(z_1,z_2,\dots)$, we define the plethystic substitution $f[Q(z_1,z_2,\dots)]$ to be the expansion of $f$ in the basis $\{p_\mu\}_\mu$ with $p_k$ replaced by $Q(z_1^k,z_2^k,\dots)$.

\section{Vertical strip LLT polynomials} \label{sec:LLT}

In this section we define the main characters of this article, i.e.\ the vertical strip LLT polynomials.

We use standard definitions and French notations for Young (Ferrers) diagrams and partitions: e.g.\ see \cite{Haglund-Book-2008}.

\subsection{Definition of LLT polynomials}

We identify a partition $\lambda=(\lambda_1,\lambda_2,\dots,\lambda_k)$ with its \emph{Ferrers diagram}, i.e.\ the set of unit squares in $\mathbb{R}^2$, called \emph{cells}, with centers $\{(i,j)\in \mathbb{Z}^2\mid 1\leq i\leq \lambda_j, 1\leq j\leq k  \}$ (we identify these with the corresponding cells). We will consider also \emph{skew diagrams} $\lambda/\mu$ with $\mu\subseteq \lambda$, and \emph{semi-standard Young tableaux} of \emph{shape} $\lambda/\mu$, i.e.\ fillings $T:\lambda/\mu\to \mathbb{Z}_{>0}$ such that $T((i,j))<T((i+1,j))$ whenever $(i,j),(i+1,j)\in \lambda/\mu$, and $T((i,j))\leq T((i,j+1))$ whenever $(i,j),(i,j+1)\in \lambda/\mu$. We denote by $SSYT(\lambda/\mu)$ the set of skew semi-standard Young tableaux of shape $\lambda/\mu$. Given $T\in SSYT(\lambda/\mu)$ we set $\mathbf{x}^T:=\prod_{c\in \lambda/\mu}x_{T(c)}$.

We define the LLT polynomials as in \cite{HHL_JAMS}.

\begin{definition}
	Given a $k$-tuple $\mathbf{\nu}=(\mathbf{\nu}^1,\dots,\mathbf{\nu}^k)$ of skew Young diagrams, we let $SSYT(\mathbf{\nu})=SSYT(\mathbf{\nu}^1)\times \cdots  \times SSYT(\mathbf{\nu}^k)$. Given $T=(T^1,\dots,T^k)\in SSYT(\mathbf{\nu})$, let $\mathbf{x}^T$ denote the product $\mathbf{x}^{T^1}\cdots \mathbf{x}^{T^k}$. We say that $T^i(u)<T^j(v)$ form an \emph{inversion} if either
	\begin{itemize}
		\item $i<j$ and $c(u)=c(v)$, or
		\item $i>j$ and $c(u)=c(v)+1$,
	\end{itemize} 
	where $c(u)$ is the \emph{content} of $u$: if $u$ has (row, column) coordinates $(i,j)$, then $c(u):=i-j$. 
	
	We call $\inv(T)$ the total number of inversions occurring in $T$.
\end{definition}
\begin{remark}
	Notice that our convention differs from the one in \cite{HHL_JAMS}: to go from one convention to the other it is enough to reverse the order of the components of the tuple $\mathbf{\nu}$.
\end{remark}
We define the \emph{LLT polynomial} 
\begin{equation}
G_{\mathbf{\nu}}[X;q]:=\sum_{T\in SSYT(\mathbf{\nu})}q^{\inv(T)}\mathbf{x}^T.
\end{equation}

It turns out that the $G_{\mathbf{\nu}}[X;q]$'s are symmetric functions, i.e.\ elements of $\Lambda$, though this is far from obvious from their definition: see \cite{HHL_JAMS} for an elementary proof of this fact.

A \emph{vertical strip} (or \emph{column}) \emph{LLT polynomial} is a $G_{\mathbf{\nu}}[X;q]$ where each skew diagram of $\mathbf{\nu}$ consists of a (continuous) \emph{vertical strip} of cells in the same column.

To visualize a tuple of skew Young diagrams, it is custom to arrange them in the first quadrant, in such a way that cells with the same content appear on the same diagonal in a non-overlapping fashion: we will call this a \emph{LLT diagram} associated to the tuple of skew Young diagrams, or skew Young tableaux.

As an example, the LLT diagram associated to a tuple of vertical strips is shown on the left of Figure~\ref{fig:LLT}, with letters inside the cells. Notice that, if this were the LLT diagram of a tuple of skew Young tableaux, then for example the pair $b,c$ would give an inversion if $b<c$, the pair $c,d$ would give an inversion if $d>c$, we must have $b<d$, and there is no way in which $d$ and $h$ can create an inversion.

\begin{figure*}[!ht]
	\begin{minipage}{.5\textwidth}
		\centering
		\begin{tikzpicture}[scale=.5]			
		\draw[gray!90, thin](0,0) grid (1,2);
		
		\draw[gray!90, thin](2,0) grid (3,2);
		
		\draw[gray!90, thin](4,3) grid (5,4);
		
		\draw[gray!90, thin](5,6) grid (6,7);
		
		\draw[gray!90, thin](6,3) grid (7,5);
		
		\draw[gray!90, thin](7,6) grid (8,8);

		\draw[red!50, thin](0,1) -- (7,8);
		\draw[red!50, thin](0,0) -- (8,8);
		\draw[red!50, thin](1,0) -- (8,7);
		\draw[red!50, thin](2,0) -- (8,6);
		\draw[red!50, thin](3,0) -- (8,5);
		
		\draw
		(.5,.5) node {g}
		(.5,1.5) node {i}
		(4.5,3.5) node {e}
		(2.5,1.5)  node {d}
		(2.5,.5) node {b}
		(5.5,6.5) node {j}
		(6.5,3.5) node {a}
		(6.5,4.5) node {c}
		(7.5,6.5) node {f}
		(7.5,7.5) node {h};
		
		\draw (10.5,3.5) node {$\Rightarrow$};		
		\end{tikzpicture}
		
	\end{minipage}%
	\hspace{-3cm}
	\begin{minipage}{.5 \textwidth}
		\centering
		
		\hspace{2cm}
		\begin{tikzpicture}[scale=0.5]
		\filldraw[yellow, opacity=0.3] (0,0) -- (0,1) -- (1,1) -- (1,2) -- (2,2) -- (2,3) -- (3,3) -- (3,4) -- (4,4) --(4,5) -- (5,5)  --(5,6) -- (6,6) -- (6,7) -- (7,7)  -- (7,8) -- (8,8) -- (8,9) -- (9,9) -- (9,10) -- (10,10) -- (10,9) -- (9,9) -- (9,8) -- (8,8) -- (8,7) -- (7,7) -- (7,6) -- (6,6) --  (6,5) -- (5,5) -- (5,4) -- (4,4) -- (4,3) -- (3,3) -- (3,2) -- (2,2) -- (2,1) -- (1,1) -- (1,0)  ;
		
		\draw[gray!60, thin] (0,0) grid (10,10);
		
		\node[blue!60] at (8.5,9.5) {$\bullet$};
		\node[blue!60] at (7.5,9.5) {$\bullet$};
		\node[blue!60] at (7.5,8.5) {$\bullet$};
		\node[blue!60] at (4.5,6.5) {$\bullet$};
		\node[blue!60] at (4.5,5.5) {$\bullet$};
		\node[blue!60] at (3.5,5.5) {$\bullet$};
		\node[blue!60] at (3.5,4.5) {$\bullet$};
		\node[blue!60] at (2.5,4.5) {$\bullet$};
		\node[blue!60] at (2.5,3.5) {$\bullet$};
		\node[blue!60] at (1.5,2.5) {$\bullet$};
		\node[blue!60] at (0.5,1.5) {$\bullet$};
		\node[blue!60] at (5.5,6.5) {$\bullet$};		
		\node[blue!60] at (6.5,7.5) {$\bullet$};		
		\node[blue!60] at (3.5,6.5) {$\bullet$};
		
		\draw
		(0.5,0.5) node {$a$}
		(1.5,1.5) node {$b$}
		(2.5,2.5) node {$c$}
		(3.5,3.5) node {$d$}
		(4.5,4.5) node {$e$}
		(5.5,5.5) node {$f$}
		(6.5,6.5) node {$g$}
		(7.5,7.5) node {$h$}
		(8.5,8.5) node {$i$}
		(9.5,9.5) node {$j$};
		
		\draw[blue!60, line width = 1pt] (0,2)--(1,3) (0,3)--(1,2);
		\draw[blue!60, line width = 1pt] (1,3)--(2,4) (1,4)--(2,3);
		\draw[blue!60, line width = 1pt] (5,7)--(6,8) (5,8)--(6,7);
		\draw[blue!60, line width = 1pt] (6,8)--(7,9) (6,9)--(7,8);

		\draw[red!60, line width = 1.6pt] (0,0)|-(1,2)|-(2,3)|-(3,5)|-(6,7)|-(7,8)|-(10,10);
		\end{tikzpicture}
	\end{minipage}
	\caption{}
	\label{fig:LLT}
\end{figure*}

\subsection{Schr\"{o}der paths associated to vertical strip LLT diagrams} \label{Sec:Schroeder_LLT}

In order to state (later in this article) a result of Carlsson and Mellit, we describe now a procedure to associate a \emph{Schr\"{o}der path} to a \emph{vertical strip LLT diagram} (i.e.\ the LLT diagram of a tuple of vertical strips). 

We borrow the notation for the pictures from \cite{Garsia_Haglund_Qiu_Romero}, in which more details can be found.

We start by labelling with $a,b,c,d,\dots$ the cells of the vertical strip LLT diagram in \emph{reading order}, i.e.\ from left to right along the diagonals with constant content, starting from the lowest diagonal and moving upward: see Figure~\ref{fig:LLT} on the left for an example. Then we place the letters $a,b,c,d,\dots$ in this order along the diagonal of a square grid: see Figure~\ref{fig:LLT} on the right for an example.

Then for every pair of letters $(p,q)$ with $p$ to the left of $q$ we draw a blue dot in the cell at the intersection of the column containing the letter $p$ with the row containing $q$ if and only if $p$ and $q$ can potentially create an inversion in the vertical strip diagram. It is easy to see that these blue dots will always determine a \emph{Dyck path}, like the red one in Figure~\ref{fig:LLT}. Moreover, for every pair of letters $(p,q)$ with $p$ to the left of $q$ we draw a blue cross if and only if $q$ is in the cell right above the cell containing $p$ in the LLT diagram (therefore we must have $p<q$). It is easy to see that these blue crosses always end up decorating a \emph{valley} that is not on the main diagonal $x=y$ of the aforementioned Dyck path, i.e.\ in the cells immediately east and immediately south of the cell containing the blue cross there are always blue dots: see Figure~\ref{fig:LLT} on the right for an example.

Now replacing the valleys with the blue crosses (together with the two adjacent steps of the Dyck path) by diagonal edges we get a \emph{Schr\"{o}der path} that does not have diagonal steps along the diagonal $x=y$: see Figure~\ref{fig:LLT2} for an example. 

\begin{figure*}[!ht]
		\centering
		
		\begin{tikzpicture}[scale=0.5]
		\filldraw[yellow, opacity=0.3] (0,0) -- (0,1) -- (1,1) -- (1,2) -- (2,2) -- (2,3) -- (3,3) -- (3,4) -- (4,4) --(4,5) -- (5,5)  --(5,6) -- (6,6) -- (6,7) -- (7,7)  -- (7,8) -- (8,8) -- (8,9) -- (9,9) -- (9,10) -- (10,10) -- (10,9) -- (9,9) -- (9,8) -- (8,8) -- (8,7) -- (7,7) -- (7,6) -- (6,6) --  (6,5) -- (5,5) -- (5,4) -- (4,4) -- (4,3) -- (3,3) -- (3,2) -- (2,2) -- (2,1) -- (1,1) -- (1,0)  ;
		
		\draw[gray!60, thin] (0,0) grid (10,10);
			
		\draw[red!60, line width = 1.6pt] (0,0)--(0,2)--(2,4)--(2,5)--(3,5)--(3,7)--(5,7)--(7,9)--(7,10)--(10,10);
		\end{tikzpicture}
	\caption{This is the Schr\"{o}der path corresponding to the diagram on the right in Figure~\ref{fig:LLT}}
	\label{fig:LLT2}
\end{figure*}
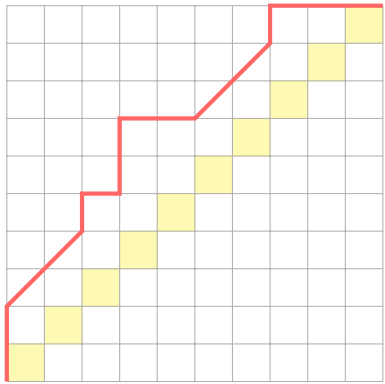

It is not hard to see that any such Schr\"{o}der path comes from a vertical strip LLT diagram via this construction. For more details on all this, see \cite{Garsia_Haglund_Qiu_Romero} or \cite{Alexandersson_Panova_cycles}. 

We call $\mathcal{P}$ the set of these paths. More precisely, we denote by $\mathcal{P}$ the family of \emph{paths} $P$ going from $(0,0)$ to $(n,n)$ for some $n\in \mathbb{N}$ which consist of vertical north steps, obtained by adding $(0,1)$, horizontal east steps, obtained by adding $(1,0)$, and diagonal northeast steps, obtained by adding $(1,1)$, with the property that $P$ always stays weakly above the \emph{base diagonal} $y=x$, and no diagonal step can take place along that line.

Given a tuple $\mathbf{\nu}$ of vertical strips, we call $P_{\mathbf{\nu}}$ the element of $\mathcal{P}$ associated to $\mathbf{\nu}$ via the construction that we explained in this subsection.
\begin{remark}
As we will only use this construction to state Theorem~\ref{thm:CM_LLT}, we omit the discussion of the classical combinatorial bijection (sending $(\dinv,\area)$ into $(\area',\bounce)$) lying behind it: see \cite{Carlsson-Mellit-ShuffleConj-2015} or \cite{Garsia_Haglund_Qiu_Romero} for more on this.
\end{remark}

\section{Dyck path algebra operators}

In this section we introduce the operators of the Dyck path algebra of Carlsson and Mellit, and we collect some basic properties that we are going to use in the sequel.

\medskip

We start by recalling some definitions from \cite[Section~4]{Carlsson-Mellit-ShuffleConj-2015}.

Given a polynomial $P$ depending on variables $u,v$, define the operator $\Upsilon_{uv}$ as

\begin{align*}
	(\Upsilon_{uv} P)(u,v) & \coloneqq \frac{(q-1)vP(u,v) + (v-qu)P(v,u)}{v-u}, 
\end{align*}

In \cite{Carlsson-Mellit-ShuffleConj-2015} these operators are called $\Delta_{uv}$, but we changed the notation in order to avoid confusion with the Delta operators $\Delta_f$ defined on $\Lambda$.

For $k \in \mathbb{N}$, define $V_k \coloneqq \Lambda[y_1, \dots, y_k]=\Lambda\otimes \mathbb{Q}[y_1,\dots,y_k]$, so that $V_0=\Lambda$. For $k\geq 2$ and $1 \leq i \leq k-1$, let \[T_i \coloneqq \Upsilon_{y_i y_{i+1}} \colon V_k \rightarrow V_k.\] 
Notice that the $T_i$ are invertible operators (see \cite[Section~4]{Carlsson-Mellit-ShuffleConj-2015} for an explicit formula of the inverse).

For $k\geq 0$, we define the operators $d_+ \colon V_k \rightarrow V_{k+1}$ as
\begin{align*}
	(d_+ F)[X] & \coloneqq T_1 T_2 \cdots T_k (F[X + (q-1) y_{k+1}])\qquad \text{ for any }F[X]\in V_k,
\end{align*}
while for $k\geq 1$ we define the operators $d_- \colon V_k \rightarrow V_{k-1}$ as
\begin{align*}
	(d_- F)[X] & \coloneqq -F[X - (q-1)y_k] \sum_{i\geq 0} \left.  (-1/y_k )^{i}e_i[X]  \right|_{{y_k}^{-1}}\qquad \text{ for  any }F[X]\in V_k.
\end{align*}
Finally, for $k\geq 1$ we define the operators $\varphi:V_k\to V_k$ as
\begin{equation} \label{eq:phi_def}
\varphi:=\frac{1}{q-1}(d_-d_+-d_+d_-).
\end{equation}

Notice that, following Carlsson and Mellit, in the notation of the operators $T_i$, $d_-$, $d_+$ and $\varphi$, the $k$ indicating the domain $V_k$ does not appear. To keep track of this in our arguments, when the domain of (compositions of) such operators is $V_k$ we say that they have \emph{degree} $k$.

We record here a few identities satisfied by these operators that we are going to need. A proof of these identities can be found in \cite[Lemma~5.3]{Carlsson-Mellit-ShuffleConj-2015} and \cite[Section~3.1]{Carlsson_Gorsky_Mellit}. 
\begin{proposition}
In all the following identities, all sides have degree $k$: we have
\begin{align}
\label{eq:Ti_rel}	(T_i-1)(T_i+q)& =0 \\
\label{eq:Ti_phi}	\varphi T_i & = T_{i+1} \varphi\quad  (i\leq k-2)\\
\label{eq:phi2}	\varphi^2 T_{k-1} & = T_1 \varphi^2\\
\label{eq:Ti_dm}	d_-T_i & =T_id_-\quad (1\leq i\leq k-2)\\
\label{eq:dm2}	d_-^2T_{k-1} & =d_-^2\\
\label{eq:dphiT}	d_-\varphi T_{k-1} & =q\varphi d_-\\
\label{eq:dplusphi}  T_1\varphi d_+ & = q d_+\varphi   \, .
\end{align}
\end{proposition}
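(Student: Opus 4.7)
My plan is to derive the seven identities sequentially from the defining formulas, in an order that lets each relation be used to prove the next. First, for (\ref{eq:Ti_rel}) I would prove the Hecke quadratic relation by direct computation: writing $\Upsilon_{uv}=A(u,v)\cdot\mathrm{id}+B(u,v)\cdot s_{uv}$, where $s_{uv}$ swaps $u,v$ and $A(u,v)=(q-1)v/(v-u)$, $B(u,v)=(v-qu)/(v-u)$, one iterates and checks the relation as an identity of rational functions; this also shows that $\Upsilon_{uv}$ preserves polynomials. In the same spirit I would verify the braid relation $T_iT_{i+1}T_i=T_{i+1}T_iT_{i+1}$ by a three-variable computation with $\Upsilon$. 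Identity (\ref{eq:Ti_dm}) is then essentially trivial: $T_i$ acts only on $y_i,y_{i+1}$, while $d_-$ interacts only with $y_k$ (through both the plethystic shift $X\mapsto X-(q-1)y_k$ and the series in $y_k^{-1}$), so when $i\leq k-2$ the two operators commute.

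With (\ref{eq:Ti_dm}) and the braid relation in hand, (\ref{eq:Ti_phi}) reduces to the parallel statement $d_+T_i=T_{i+1}d_+$ for $i\leq k-1$, which follows from the definition $d_+=T_1T_2\cdots T_k\circ(X\mapsto X+(q-1)y_{k+1})$ by moving $T_i$ through the product $T_1\cdots T_k$ with braid and commutation moves. Inserting both commutations into $\varphi=(q-1)^{-1}(d_-d_+-d_+d_-)$ yields $\varphi T_i=T_{i+1}\varphi$ in a short chain. For (\ref{eq:dm2}), the content is that $d_-^2$ kills the $(-q)$-eigenspace of $T_{k-1}$: by the quadratic relation every $F\in V_k$ decomposes as $F=F_++F_-$ with $T_{k-1}F_+=F_+$ and $T_{k-1}F_-=-qF_-$, so $d_-^2T_{k-1}F=d_-^2(F_+-qF_-)$ and the identity is equivalent to $d_-^2F_-=0$. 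This is checked by direct plethystic bookkeeping after expanding the generating series in $d_-$ and extracting coefficients in $y_k^{-1}$ and then $y_{k-1}^{-1}$. The remaining boundary identities (\ref{eq:phi2}), (\ref{eq:dphiT}), (\ref{eq:dplusphi}) then follow by iterating (\ref{eq:Ti_phi}) to transport $T_{k-1}$ across $\varphi^2$, and by re-expanding $\varphi$ as $[d_-,d_+]/(q-1)$ and invoking (\ref{eq:dm2}) together with the analogous symmetrization statement on the $d_+$ side.

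The main obstacle will be the plethystic bookkeeping behind (\ref{eq:dm2}) and the two twisted commutation relations (\ref{eq:dphiT}) and (\ref{eq:dplusphi}). These require unwinding the series $\sum_i(-1/y_k)^ie_i[X]$ after the shift $X\mapsto X-(q-1)y_k$ and tracking how it interacts with $T_{k-1}$ acting on the top two $y$-variables. The resulting identities essentially recover the standard polynomial representation of the affine Hecke algebra, but on $\Lambda[y_1,\dots,y_k]$ the $X$-plethysm is not diagonalized, so the verification must be carried out by carefully extracting coefficients of $y_k^{-1}$ (and $y_{k-1}^{-1}$) and applying the quadratic relation by hand, rather than by appealing to a clean eigenspace decomposition.
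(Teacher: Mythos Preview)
The paper itself does not prove this proposition: it simply records the identities and refers the reader to \cite[Lemma~5.3]{Carlsson-Mellit-ShuffleConj-2015} and \cite[Section~3.1]{Carlsson_Gorsky_Mellit} for proofs. So there is no argument in the paper to compare yours against; you are attempting to reconstruct what the cited references do.

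Your outline for \eqref{eq:Ti_rel}, the braid relation, \eqref{eq:Ti_dm}, the companion relation $d_+T_i=T_{i+1}d_+$, and hence \eqref{eq:Ti_phi}, is sound and is essentially how Carlsson--Mellit proceed. The eigenspace reformulation of \eqref{eq:dm2} is also correct and is a reasonable way to organize that computation.

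The genuine gap is in your treatment of the three ``boundary'' identities \eqref{eq:phi2}, \eqref{eq:dphiT}, \eqref{eq:dplusphi}. You propose to obtain \eqref{eq:phi2} by ``iterating \eqref{eq:Ti_phi} to transport $T_{k-1}$ across $\varphi^2$'', but \eqref{eq:Ti_phi} only applies for $i\le k-2$, so it cannot move $T_{k-1}$ past a single $\varphi$; there is no algebraic shortcut here from the interior relations alone. Likewise, expanding $\varphi=(q-1)^{-1}[d_-,d_+]$ and invoking \eqref{eq:dm2} together with the analogue $T_1d_+^2=d_+^2$ does not by itself close up \eqref{eq:dphiT} or \eqref{eq:dplusphi}: after the obvious substitutions you are left with terms like $d_-d_+d_-T_{k-1}$ for which none of the commutation rules apply. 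Your last paragraph tacitly acknowledges this by falling back on ``plethystic bookkeeping'', which is indeed where the content lies.

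The route actually taken in \cite{Carlsson-Mellit-ShuffleConj-2015} is cleaner: one first establishes the closed formula $\varphi F=T_1\cdots T_{k-1}(-y_kF)$ (this is \eqref{eq:phi_yk} in the present paper, their Lemma~5.4) by a direct computation from the definitions of $d_\pm$. Once $\varphi$ is expressed as a Hecke word times multiplication by $-y_k$, the boundary identities \eqref{eq:phi2}, \eqref{eq:dphiT}, \eqref{eq:dplusphi} reduce to short Hecke-algebra manipulations together with elementary facts about how $T_{k-1}$ interacts with multiplication by $y_k$ (the polynomial representation of the affine Hecke algebra). I would reorganize your plan around proving \eqref{eq:phi_yk} first; the rest then follows without the heavy coefficient extraction you anticipate.
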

From these we can deduce the following identities, which will be useful in the sequel.
\begin{proposition}
In degree $k\geq 2$ we have
\begin{align}
\label{eq:auxdplus}	\varphi d_+ & = T_1d_+\varphi +(q-1) d_+\varphi \\
\label{eq:auxdminus} \varphi d_-T_{k-1} & =d_-\varphi - (q-1)\varphi d_- \, .
\end{align}
\end{proposition}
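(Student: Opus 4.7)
The plan is to derive both identities algebraically from the quadratic Hecke relation (eq:Ti_rel) together with (eq:dplusphi) and (eq:dphiT). The key observation is that $(T_i-1)(T_i+q)=0$ can be rewritten as $T_i(T_i+q-1)=q$, which shows that $T_i$ is invertible with the explicit formula
\begin{equation*}
T_i^{-1}=\frac{1}{q}(T_i+q-1).
\end{equation*}
Once this is in hand, each of (eq:auxdplus) and (eq:auxdminus) is a two-line manipulation.

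For (eq:auxdplus), I would start from (eq:dplusphi), namely $T_1\varphi d_+=qd_+\varphi$, apply $T_1^{-1}$ on the left, and then substitute the formula above:
\begin{equation*}
\varphi d_+ = qT_1^{-1}d_+\varphi = (T_1+q-1)d_+\varphi = T_1d_+\varphi+(q-1)d_+\varphi.
\end{equation*}
For (eq:auxdminus), I would start instead from (eq:dphiT), namely $d_-\varphi T_{k-1}=q\varphi d_-$, apply $T_{k-1}^{-1}$ on the right, and substitute again:
\begin{equation*}
d_-\varphi = q\varphi d_- T_{k-1}^{-1} = \varphi d_-(T_{k-1}+q-1) = \varphi d_-T_{k-1}+(q-1)\varphi d_-.
\end{equation*}
Rearranging yields $\varphi d_-T_{k-1}=d_-\varphi-(q-1)\varphi d_-$, which is (eq:auxdminus).

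There is no substantive obstacle here; both identities are really just (eq:dplusphi) and (eq:dphiT) read through the explicit inverse coming from the Hecke quadratic relation. The only thing to double-check is that the degree hypothesis $k\geq 2$ is exactly what is needed to make $T_1$ (respectively $T_{k-1}$) a legitimate operator on the domains at hand, after which the manipulations above go through verbatim.
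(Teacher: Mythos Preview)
Your proposal is correct and follows essentially the same approach as the paper: both derive the explicit form of $T_i^{-1}$ from the Hecke quadratic relation \eqref{eq:Ti_rel} (the paper writes it as $T_i=qT_i^{-1}-(q-1)$, which is your $T_i^{-1}=\tfrac{1}{q}(T_i+q-1)$), and then substitute into \eqref{eq:dplusphi} and \eqref{eq:dphiT} exactly as you do. The only cosmetic difference is that for \eqref{eq:auxdminus} the paper computes $\varphi d_- T_{k-1}$ directly rather than first isolating $d_-\varphi$ and rearranging, but the algebra is identical.
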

\begin{proof}
Observe that for any $i$
\begin{align*}
	(T_{i}-1)(T_{i}+q) = T_{i}^2+(q-1)T_{i}-q
\end{align*}
so that, multiplying by $T_{i}^{-1}$,
we can rewrite \eqref{eq:Ti_rel} as
\begin{equation} \label{eq:auxiliar}
T_{i}=qT_{i}^{-1} -(q-1). \end{equation}
Now multiplying \eqref{eq:dplusphi} by $T_1^{-1}$ we get
\begin{align*}
 \varphi d_+ & = q 	T_1^{-1} d_+\varphi\\
 \text{(using \eqref{eq:auxiliar})} & = T_1  d_+\varphi + (q-1) d_+\varphi
\end{align*}
giving \eqref{eq:auxdplus}.

On the other hand, using again \eqref{eq:auxiliar} we get
\begin{align*}
	\varphi d_-T_{k-1} & = q\varphi d_- T_{k-1}^{-1} -(q-1) \varphi d_-\\
\text{(using \eqref{eq:dphiT})}	& = d_-\varphi T_{k-1} T_{k-1}^{-1} -(q-1) \varphi d_- \\
& = d_-\varphi  -(q-1) \varphi d_-
\end{align*}
giving \eqref{eq:auxdminus}.
\end{proof}

Moreover, by \cite[Lemma~5.4]{Carlsson-Mellit-ShuffleConj-2015}, for $F\in V_k$
\begin{equation} \label{eq:phi_yk}
\varphi F=T_1T_2\cdots T_{k-1}(-y_kF).
\end{equation}

\section{Path operators}

Consider the family $\mathcal{P}$ of \emph{paths} defined in Section~\ref{sec:LLT}, i.e.\ Schr\"{o}der paths with no diagonal steps on the main diagonal $x=y$.

\medskip

\emph{As we will only use paths coming from $\mathcal{P}$, in the rest of this article the word ``path'' without further specifications will be used to mean ``path in $\mathcal{P}$''.}

\medskip

It will be convenient to encode such paths with \emph{words} in the alphabet $\{-,0,+\}$ where a north step corresponds to a `$-$', a northeast step to a `$0$', and an east step to a `$+$'. For example the path in Figure~\ref{fig:LLT2} gets encoded by $(-,-,0,0,-,+,-,-,+,+,0,0,-,+,+,+)$.

The \emph{degree} of a step of a path $P\in \mathcal{P}$ is the ordinate of the highest rightmost point of the step. Equivalently, it is the number of `$-$' minus the number of `$+$' occurring in the word encoding $P$ weakly to the left of the considered step. Observe that, by definition of $\mathcal{P}$, every `$0$' has positive degree, while the highest rightmost step of a path has always degree $0$.

A \emph{partial path} is simply a prefix of a path $P\in \mathcal{P}$, i.e.\ it consists of the first few consecutive steps of a path in $\mathcal{P}$ (so that its highest rightmost step can possibly have positive degree). The \emph{degree} of a partial path is defined to be the degree of its highest rightmost step.

A path is said to be \emph{non-touching} if it touches the base line only at the beginning and at the end, equivalently if its only step of degree $0$ is its rightmost one. 

Observe that any path $P$ can be decomposed in a unique way as the concatenation $P=P_1P_2\cdots P_k$ of non-touching paths $P_i$. 
\begin{definition}
We associate to each path $P\in \mathcal{P}$ a \emph{path operator} $d_P$ on $V_0=\Lambda$, by replacing each `$-$' with a $d_-$, each `$0$' with a $\varphi$, and each `$+$' with a $d_+$. 

We define similarly \emph{partial path operators} $d_P$ associated to partial paths $P$, where now $d_P:V_k\to V_0$, where $k$ is the degree of $P$.
\end{definition}
For example, if $P$ is encoded by $(-,-,0,0,+,0,+)$, then we get $d_P=d_-d_-\varphi \varphi d_+ \varphi d_+$.

\medskip

The main result that we are going to need about path operators is the following theorem of Carlsson and Mellit which is implict in \cite{Carlsson-Mellit-ShuffleConj-2015}, and made explicit in \cite{Garsia_Haglund_Qiu_Romero}.
\begin{theorem} \label{thm:CM_LLT}
Let $\mathbf{\nu}$ be a tuple of vertical strips. Then
\begin{equation}
G_{\mathbf{\nu}}[X;q]=d_{P_{\mathbf{\nu}}}(1),
\end{equation}
where $P_{\mathbf{\nu}}$ is defined in Section~\ref{Sec:Schroeder_LLT}.
\end{theorem}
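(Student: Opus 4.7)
The plan is to induct on the length of the path $P_{\mathbf{\nu}}$, interpreting the intermediate object $d_{P'}(1)\in V_k$ produced by a partial path $P'$ of degree $k$ as a generating function by $q^{\inv}$ over the partial fillings of the portion of the vertical strip diagram that has been processed so far, where the auxiliary variables $y_1,\dots,y_k$ record the entries sitting in the $k$ currently-open columns (those whose bottommost cell has been placed but which may still accept further cells on top). The base case is the empty path and $1\in V_0=\Lambda$. The inductive step is to show that each atomic operator updates this generating function in a way that matches the combinatorial definition of $G_{\mathbf{\nu}}[X;q]$.

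The three atomic moves admit natural combinatorial interpretations that I would verify directly from the definitions. The east step $d_+$ starts a new column: the plethystic shift $X\mapsto X+(q-1)y_{k+1}$ introduces the placeholder variable for the new column, and the braid $T_1\cdots T_k$ passes this new variable past the existing open columns, picking up one factor of $q$ per future same-content inversion between the new column and each of the open columns. The diagonal step $\varphi$, which by \eqref{eq:phi_yk} equals the operator $F\mapsto T_1\cdots T_{k-1}(-y_k F)$, represents a cell forced to sit on top of the highest open column, which is precisely the situation of a non-main-diagonal decorated valley in the underlying Dyck path; the $-y_k$ supplies the new entry variable and the braid performs the same inversion-weighted reshuffling. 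The north step $d_-$ closes the highest open column, extracting via its residue formula the generating function over possible top-cell entries and removing the slot $y_k$.

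The main obstacle is verifying the precise correspondence between the $\Upsilon_{uv}$-factors inside the $T_i$ and the two forms of inversion in the definition of $\inv(T)$ (same content, or contents differing by one). Concretely, after expanding $\Upsilon_{uv} P(u,v)=\frac{(q-1)vP(u,v)+(v-qu)P(v,u)}{v-u}$, one must check that the two terms correspond to the two relative orderings of the entries in cells of the relevant pair, and that the weights $1$ and $q$ are assigned correctly. This is essentially the content of Carlsson--Mellit's original argument in the unit-cell case; for vertical strips one additionally has to check that the replacement of a $d_+d_-$-pair at a non-main-diagonal valley by a single $\varphi$ (using $\varphi=\frac{1}{q-1}(d_-d_+-d_+d_-)$ and the relations \eqref{eq:Ti_rel}--\eqref{eq:phi_yk}) correctly realises the combinatorial collapse that identifies two cells into a vertically adjacent pair with a forced strict increase between them. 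Given the identities already recorded in Section~3, this reduces to a moderately long but conceptually clean verification.
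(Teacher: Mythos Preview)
The paper does not contain a proof of this theorem. It is stated as a result of Carlsson and Mellit, ``implicit in \cite{Carlsson-Mellit-ShuffleConj-2015}, and made explicit in \cite{Garsia_Haglund_Qiu_Romero}'', and is used as a black box for the rest of the article. So there is no in-paper argument to compare your proposal against.

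Your outline is broadly the Carlsson--Mellit strategy: maintain a partial generating function in $V_k$ indexed by the currently open strips, and check that each of $d_+$, $\varphi$, $d_-$ updates it correctly. Two cautions if you intend to flesh this out. First, be careful about orientation: the encoding of $P_{\mathbf{\nu}}$ reads the path from $(0,0)$ to $(n,n)$, but in $d_P(1)$ the rightmost symbol is applied first, so you are effectively processing the Schr\"oder path, and hence the cells of the LLT diagram, in reverse reading order; make sure your assignment of ``$d_+$ starts a column'' versus ``$d_-$ closes a column'' is consistent with that direction. Second, as you yourself note, the real work is the verification that the two terms of $\Upsilon_{uv}$ match the two possible orderings of an inversion pair with the correct $q$-weight, and that collapsing a $d_-d_+$ at a decorated valley to $\varphi$ via \eqref{eq:phi_def} exactly enforces the strict increase between vertically adjacent cells without overcounting. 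Your proposal acknowledges these points but does not carry them out; as written it is a plan rather than a proof.
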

Thanks to this theorem, we can work with path operators in order to prove the main result of this article. 

\section{$e$-positivity}

In this section we prove the main result of this article, i.e.\ the $e$-positivity of $G_{\mathbf{\nu}}[X;q+1]$ where $G_{\mathbf{\nu}}[X;q]$ is a vertical strip LLT polynomial.

\medskip

We start with a lemma.
\begin{lemma} \label{lem:ekoperator}
	For $m\geq 0$, the path operator $d_-\varphi^m d_+$  equals the operator of multiplication by $e_{m+1}$.
\end{lemma}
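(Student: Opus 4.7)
My plan is to compute the action of $d_-\varphi^m d_+$ on an arbitrary $F\in V_0=\Lambda$ directly, using the explicit formulas for $d_+$, $\varphi$, and $d_-$ from Section~3. Since $F$ has degree $0$, after applying $d_+$ we land in $V_1$, and $\varphi^m$ acts entirely in degree $1$; the only nontrivial plethystic manipulation happens at the two endpoints (in $d_+$ and $d_-$) and in the final coefficient extraction.

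First, I would apply $d_+$. By definition $(d_+F)[X]=T_1\cdots T_k(F[X+(q-1)y_{k+1}])$ with $k=0$, so the product of $T_i$'s is empty and simply
\[
d_+F \;=\; F[X+(q-1)y_1]\,\in V_1.
\]
Next, I would apply $\varphi$ iteratively in degree $1$. The identity \eqref{eq:phi_yk} says $\varphi G=T_1\cdots T_{k-1}(-y_kG)$, which in degree $k=1$ collapses to $\varphi G=-y_1G$. Therefore
\[
\varphi^m d_+F \;=\; (-y_1)^m\,F[X+(q-1)y_1].
\]

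The key remaining step is to apply $d_-$ to this element of $V_1$. By definition,
\[
d_-G \;=\; -\,G[X-(q-1)y_1]\sum_{i\geq 0}(-1/y_1)^i e_i[X]\Big|_{y_1^{-1}}.
\]
Crucially, the plethystic substitution $X\mapsto X-(q-1)y_1$ affects only the alphabet $X$ in the symmetric-function part, not the standalone variable $y_1$. Applied to our $G=(-y_1)^mF[X+(q-1)y_1]$, it gives $(-y_1)^mF[X]$, because the two $(q-1)y_1$ contributions cancel inside $F$. Thus
\[
d_-\varphi^m d_+F \;=\; -\,(-y_1)^m F[X]\sum_{i\geq 0}(-1/y_1)^i e_i[X]\Big|_{y_1^{-1}}.
\]
Extracting the coefficient of $y_1^{-1}$ from $(-y_1)^m \sum_{i\geq 0}(-1)^i y_1^{-i} e_i[X]$ picks out the index $i=m+1$, contributing $(-1)^m(-1)^{m+1}e_{m+1}[X]=-e_{m+1}[X]$. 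The outer minus sign then yields $e_{m+1}[X]F[X]$, so
\[
d_-\varphi^m d_+F \;=\; e_{m+1}\cdot F,
\]
which is the desired identity.

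This is essentially a direct computation; I do not expect a genuine obstacle. The one point that requires care—and is the only substantive thing to check—is that in $d_-$ the substitution $X\mapsto X-(q-1)y_1$ acts plethystically only on the alphabet $X$ and leaves the explicit variable $y_1$ untouched, so that the two $(q-1)y_1$'s cancel inside $F[\,\cdot\,]$ but the prefactor $(-y_1)^m$ survives intact. Once this is understood, the coefficient extraction in $y_1^{-1}$ immediately produces $e_{m+1}$.
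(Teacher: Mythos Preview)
Your proof is correct and follows essentially the same route as the paper's: both use \eqref{eq:phi_yk} in degree $1$ to reduce $\varphi^m$ to multiplication by $(-y_1)^m$, then cancel the $(q-1)y_1$ inside $F$ upon applying $d_-$, and finally extract the coefficient of $y_1^{-1}$ to obtain $e_{m+1}$. Your presentation is slightly more detailed (e.g.\ spelling out that the $T_i$ product in $d_+$ is empty at degree $0$ and being explicit about the coefficient extraction), but the argument is the same.
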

\begin{proof}
Using \eqref{eq:phi_yk} in degree $1$, we get the equality in degree $0$
\[d_-\varphi^m d_+=(-1)^m d_-y_1^m d_+.\]
	
For any $F[X]\in \Lambda=V_0$ we compute
\begin{align*}
	d_-\varphi^m d_+F[X] & = (-1)^md_-y_1^m d_+F[X] \\
	& = (-1)^md_-y_1^m F[X + (q-1) y_{1}]\\
	& = (-1)^{m+1} y_1^m F[X + (q-1) y_{1}- (q-1)y_1] \sum_{i\geq 0} \left.  (-1/y_1 )^{i}e_i[X]  \right|_{{y_1}^{-1}} \\
	& = F[X] \sum_{i\geq 0} \left.  (-1)^{m+i+1} (y_1)^{m-i}e_i[X]  \right|_{{y_1}^{-1}}\\
	& =e_{m+1}F[X].
\end{align*}
\end{proof}

The key ingredient of our main result is the following lemma.

\begin{lemma} \label{lem:crux}
	Consider a partial path operator $d_P$ of the form
	\[d_P=d_-\varphi^{a_1} d_-\varphi^{a_2} d_-\cdots  d_-\varphi^{a_{s-1}} d_-\varphi^{a_s} d_+\]
	with $s\geq 2$, and $a_i\geq 0$ for all $i=1,2,\dots,s$. 
	
	If $a_s=0$ then
	\begin{equation} \label{eq:case0}
	d_P=d_-\varphi^{a_1} d_-\cdots  d_-\varphi^{a_{s-1}}  d_+ d_- +(q-1)d_-\varphi^{a_1} d_-\cdots  d_-\varphi^{a_{s-1}}  \varphi.
	\end{equation}
	
	If $a_s>0$ then either
	\begin{equation} \label{eq:case1}
	d_P=qd_-\varphi^{a_1} d_-\cdots  d_-\varphi^{a_{s-1}} d_-\varphi^{a_{s}-1} d_+ \varphi  ,
	\end{equation}
	or
	\begin{align}\label{eq:case2} d_P & =q^\epsilon d_-\varphi^{a_1} d_-\cdots  d_-\varphi^{a_{s-1}-1} d_-\varphi^{a_{s}} d_+ \varphi  \\
	\notag	& + (q^\epsilon-1)d_-\varphi^{a_1} d_-\cdots  d_-\varphi^{a_{s-1}} d_-\varphi^{a_{s}-1} d_+ \varphi ,
	\end{align}
	with $a_{s-1}>0$ and $\epsilon\in \{0,1\}$, or
	\begin{align} \label{eq:lem_case3}
		d_P & =q^\epsilon d_-\varphi^{a_1} d_-\cdots d_-\varphi^{a_i-1} d_- \varphi^{a_{i+1}+1} d_-\cdots  d_-\varphi^{a_{s-1}} d_-\varphi^{a_{s}-1} d_+ \varphi\\
	\notag	&  + (q^\epsilon-1)d_-\varphi^{a_1} d_-\cdots  d_-\varphi^{a_{s-1}} d_-\varphi^{a_{s}-1} d_+ \varphi
	\end{align}
	for some $1\leq i\leq s-2$ for which $a_i>0$, and $\epsilon\in \{0,1\}$.
\end{lemma}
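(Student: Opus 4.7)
The plan is to split on whether $a_s=0$ or $a_s>0$. For $a_s=0$ the argument is direct: rearranging \eqref{eq:phi_def} gives $d_-d_+ = d_+d_- + (q-1)\varphi$, and since $a_s=0$ places the rightmost $d_-$ adjacent to the rightmost $d_+$ of $d_P$, substituting there immediately produces the two summands of \eqref{eq:case0}.

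For $a_s>0$ the first move will be to push $d_+$ one step to the left across a $\varphi$ via \eqref{eq:auxdplus}: $\varphi d_+ = T_1 d_+\varphi + (q-1)d_+\varphi$. Applied to the rightmost $\varphi d_+$ of $d_P$, this decomposes $d_P$ as a ``$T_1$-term'' plus $(q-1)$ times the ``base path'' $d_-\varphi^{a_1}\cdots d_-\varphi^{a_s-1}d_+\varphi$, which is exactly the second summand appearing in \eqref{eq:case2} and \eqref{eq:lem_case3} with coefficient $(q^\epsilon-1)$. The remaining task is therefore to simplify the $T_1$-bearing term into $q^\epsilon$ times the matching ``shifted'' path predicted by one of the three conclusions.

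To do so, I would push $T_1$ leftward using \eqref{eq:Ti_phi} (each $\varphi$ crossed increments the index, as long as that index is still at most the current degree minus two), \eqref{eq:Ti_dm} (commutation with $d_-$ in the same range), and if necessary \eqref{eq:phi2} (which resets a top-index $T_{k-1}$ back to $T_1$ by spending a $\varphi^2$). Eventually the intruder gets trapped as a top-index $T_{k-1}$ adjacent to a $d_-$, at which point one of three absorption identities removes it. If the local structure is $d_-^2 T_{k-1}$ (which happens precisely when the next block to the left is empty), \eqref{eq:dm2} removes $T$ cleanly, returning the base path, and combining with the leftover $(q-1)$-term gives $q$ times the base path, matching \eqref{eq:case1}. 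If the local structure is $\varphi d_- T_{k-1}$ (requiring a non-zero block to the left), \eqref{eq:auxdminus} removes $T$ at the cost of a $-(q-1)$-correction supported on the base path; this correction either exactly cancels the leftover $(q-1)$-term (yielding $\epsilon=0$) or combines with it (yielding $\epsilon=1$, obtainable cleanly via \eqref{eq:dphiT}), producing \eqref{eq:case2} when the absorption is at block $s-1$ and \eqref{eq:lem_case3} with the appropriate interior $i$ when it happens earlier. The main obstacle will be the combinatorial case analysis itself: for every profile $(a_1,\ldots,a_s)$ one must show that this ``travel-and-absorb'' procedure terminates at a recognisable position and yields one of the three forms \emph{verbatim}, including the bookkeeping of the $(q-1)$ cancellations (which reduce to \eqref{eq:auxiliar}) that pins down $\epsilon\in\{0,1\}$.
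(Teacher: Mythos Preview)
Your proposal is correct and follows essentially the same route as the paper: the $a_s=0$ case via \eqref{eq:phi_def}, and the $a_s>0$ case via \eqref{eq:auxdplus} followed by commuting the $T_1$ leftward with \eqref{eq:Ti_phi}, \eqref{eq:phi2}, \eqref{eq:Ti_dm} until it is absorbed by one of \eqref{eq:dm2}, \eqref{eq:dphiT}, or \eqref{eq:auxdminus}. The only point to tighten is that the two values of $\epsilon$ correspond to two genuinely different absorption configurations --- $d_-\varphi T_{k-1}$ (giving $\epsilon=1$ via \eqref{eq:dphiT}) versus $\varphi d_- T_{k-1}$ (giving $\epsilon=0$ via \eqref{eq:auxdminus}, whose $-(q-1)$ correction exactly cancels the leftover term) --- rather than two outcomes of a single situation.
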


\begin{proof}
	If $a_s=0$, then we can use the definition \eqref{eq:phi_def} of $\varphi$ to get
	\begin{align*}
		& \hspace{-0.5cm} d_-\varphi^{a_1} d_-\cdots  d_-\varphi^{a_{s-1}} d_-\varphi^{a_s} d_+ =\\
		& = d_-\varphi^{a_1} d_-\cdots  d_-\varphi^{a_{s-1}} d_- d_+ \\
		& = d_-\varphi^{a_1} d_-\cdots  d_-\varphi^{a_{s-1}}  (d_- d_+ - d_+d_-)  +d_-\varphi^{a_1} d_-\cdots  d_-\varphi^{a_{s-1}}  d_+ d_- \\
	    & = (q-1)d_-\varphi^{a_1} d_-\cdots  d_-\varphi^{a_{s-1}}  \varphi +d_-\varphi^{a_1} d_-\cdots  d_-\varphi^{a_{s-1}}  d_+ d_-,
	\end{align*}
	which gives \eqref{eq:case0}.
	
	If $a_s>0$, then we can use \eqref{eq:auxdplus} to get
	\begin{align*}
		& \hspace{-0.5cm} d_-\varphi^{a_1} d_-\cdots  d_-\varphi^{a_{s-1}} d_-\varphi^{a_s} d_+ =\\
		& = d_-\varphi^{a_1} d_-\cdots  d_-\varphi^{a_{s-1}} d_-\varphi^{a_{s}-1} \textcolor{red}{\varphi d_+} \\
		& = d_-\varphi^{a_1} d_-\cdots  d_-\varphi^{a_{s-1}} d_-\varphi^{a_{s}-1} \textcolor{blue}{T_1d_+ \varphi }  + \textcolor{blue}{(q-1)}d_-\varphi^{a_1} d_-\cdots  d_-\varphi^{a_{s-1}} d_-\varphi^{a_{s}-1} \textcolor{blue}{d_+ \varphi} .
	\end{align*}
	Now, in order to get rid of $T_1$ in the first summand, we can use the commutation relations \eqref{eq:Ti_phi}, \eqref{eq:phi2} and \eqref{eq:Ti_dm}, i.e.\
	\[ \varphi T_i = T_{i+1} \varphi\quad  (i\leq k-2),\qquad \varphi^2 T_{k-1} = T_1 \varphi^2, \qquad d_-T_i=T_id_-\quad (1\leq i\leq k-2)  \]
	to commute the $T_1$ to the left, possibly changing it into another $T_i$, until we reach one of the following situations: 
	\begin{enumerate}
		\item in degree $k\geq 2$, we reach $\cdots d_-^2T_{k-1}$: in this case the relation \eqref{eq:dm2}, i.e.\ $d_-^2T_{k-1}=d_-^2$, allows us to simply remove the $T_{k-1}$;
		
		\item in degree $k\geq 2$, we reach $\cdots d_-\varphi T_{k-1}$: in this case the relation \eqref{eq:dphiT}, i.e.\ $d_-\varphi T_{k-1}=q\varphi d_-$, allows us again to remove the $T_{k-1}$, getting a factor $q$;
		
		\item in degree $k\geq 3$, we reach $\cdots \varphi d_- T_{k-1}$: in this case the relation \eqref{eq:auxdminus}, i.e.\ \[ \varphi d_- T_{k-1}= d_- \varphi -(q-1) \varphi d_-, \]
		will give a summand that cancels the one that we left out before, leaving us again with a summand without $T_{k-1}$.
	\end{enumerate}
	In situation (1) we get \eqref{eq:case1}, while in situations (2) and (3) we get the remaining claimed possibilities, i.e. \eqref{eq:case2} and \eqref{eq:lem_case3}, with $\epsilon=1$ and $\epsilon=0$ respectively.
\end{proof}

\begin{example} \label{ex:lem}
Consider the path $P\in \mathcal{P}$ encoded by $(-,0,-,-,0,0,0,0,+,+,+)$. We compute
\begin{align*}
	d_P & = d_-\varphi d_-^2\varphi^4 d_+^3\\
	& =	d_-\varphi d_-^2\varphi^3 \textcolor{red}{\varphi d_+}d_+^2\\
\text{(using \eqref{eq:auxdplus})}	& =	d_-\varphi d_-^2\varphi^3 \textcolor{blue}{T_1 d_+ \varphi}d_+^2 +\textcolor{blue}{(q-1)} d_-\varphi d_-^2\varphi^3  \textcolor{blue}{d_+ \varphi}d_+^2\\
	& =	d_- \varphi d_-^2\varphi^2\textcolor{red}{\varphi T_1} d_+ \varphi d_+^2 + (q-1) d_-\varphi d_-^2\varphi^3 d_+ \varphi d_+^2\\
\text{(using \eqref{eq:Ti_phi})}	& =	d_-\varphi d_-^2\varphi^2\textcolor{blue}{T_2 \varphi} d_+ \varphi d_+^2 + (q-1) d_-\varphi d_-^2\varphi^3 d_+ \varphi d_+^2 \\
& =	d_-\varphi d_-^2\textcolor{red}{\varphi^2 T_2 } \varphi d_+ \varphi d_+^2 + (q-1) d_-\varphi d_-^2\varphi^3 d_+ \varphi d_+^2 \\
\text{(using \eqref{eq:phi2})} & =	d_-\varphi d_-^2\textcolor{blue}{T_1\varphi^2 } \varphi d_+ \varphi d_+^2 + (q-1) d_-\varphi d_-^2\varphi^3 d_+ \varphi d_+^2 \\
& =	d_-\varphi d_- \textcolor{red}{d_-T_1}\varphi^3 d_+ \varphi d_+^2 + (q-1) d_-\varphi d_-^2\varphi^3 d_+ \varphi d_+^2\\
\text{(using \eqref{eq:Ti_dm})}& =	d_-\varphi d_- \textcolor{blue}{T_1 d_-}\varphi^3 d_+ \varphi d_+^2 + (q-1) d_-\varphi d_-^2\varphi^3 d_+ \varphi d_+^2\\
& =	d_-\textcolor{red}{\varphi d_- T_1} d_- \varphi^3 d_+ \varphi d_+^2 + (q-1) d_-\varphi d_-^2\varphi^3 d_+ \varphi d_+^2\\
\text{(using \eqref{eq:auxdminus})}& =	d_-\textcolor{blue}{d_- \varphi } d_- \varphi^3 d_+ \varphi d_+^2 \textcolor{blue}{-(q-1)}d_-\textcolor{blue}{\varphi d_- } d_- \varphi^3 d_+ \varphi d_+^2\\
& + (q-1) d_-\varphi d_-^2\varphi^3 d_+ \varphi d_+^2\\
& =	d_-^2 \varphi  d_- \varphi^3 d_+ \varphi d_+^2\, ,
\end{align*}
which gives the case \eqref{eq:lem_case3} with $\epsilon=0$.
\end{example}

We are now able to prove our main result.

\begin{theorem} \label{thm:main}
	If $d_P$ is a path operator, then the symmetric function $\left. d_P(1)\right|_{q=q+1}$ is $e$-positive.
\end{theorem}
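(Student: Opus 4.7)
The plan is to proceed by strong induction on the pair $\mu(d_P):=(|P|,\,p(d_P))$, ordered lexicographically, where $|P|$ is the total number of operators appearing in $d_P$ (equivalently the number of steps of $P$) and $p(d_P)$ is the position of the leftmost $d_+$ in $d_P$ (set to $0$ if none). The base case $|P|=0$ is immediate since $d_P(1)=1=e_{\emptyset}$. For the inductive step, since every non-empty $P\in\mathcal{P}$ starts with a north step and contains at least one east step, the prefix of $d_P$ up to and including the leftmost $d_+$ has the form $d_-\varphi^{a_1}d_-\varphi^{a_2}\cdots d_-\varphi^{a_s}d_+$ for some $s\geq 1$ and $a_i\geq 0$, which splits the analysis into two cases.

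If $s=1$, the prefix is $d_-\varphi^{a_1}d_+$, which by Lemma~\ref{lem:ekoperator} equals multiplication by $e_{a_1+1}$ on $V_0$. Because this prefix ends on the main diagonal, the remaining piece $P_{\text{rest}}$ of the path, translated to the origin, is itself in $\mathcal{P}$. Writing $d_P=e_{a_1+1}\cdot d_{P_{\text{rest}}}$ and applying the inductive hypothesis to $d_{P_{\text{rest}}}$ (available since $|P_{\text{rest}}|<|P|$) finishes this case, because multiplying an $e$-positive symmetric function by $e_{a_1+1}$ keeps it $e$-positive.

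If $s\geq 2$, apply Lemma~\ref{lem:crux}: one of the identities \eqref{eq:case0}--\eqref{eq:lem_case3} rewrites the prefix as a sum of at most two partial path operators with coefficients in $\{1,\,q-1,\,q,\,q^{\epsilon}-1,\,q^{\epsilon}\}$ for $\epsilon\in\{0,1\}$. Every such coefficient becomes a non-negative element of $\mathbb{N}[q]\subset\mathbb{N}[q,t]$ after the substitution $q\mapsto q+1$. Substituting the identity back into $d_P$ yields $d_P=\sum_i c_i(q)\,d_{P_i}$, and the tasks are to check (a) that each $P_i$ still lies in $\mathcal{P}$ and (b) that $\mu(d_{P_i})<_{\text{lex}}\mu(d_P)$. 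For (a) a direct path-degree bookkeeping suffices: the hypothesis $s\geq 2$ keeps any relocated $\varphi$ or $d_-$ strictly above the diagonal, and the new prefix ends at the same lattice point and path-degree as the original prefix, so the tail $d_{P_{\text{rest}}}$ attaches consistently. For (b): the second summand of \eqref{eq:case0} replaces a $d_-d_+$ pair by a single $\varphi$ and therefore strictly reduces $|P|$, while in every other branch $|P|$ is preserved but the leftmost $d_+$ moves strictly to the left (by one or two positions), lowering the second coordinate of $\mu$. The inductive hypothesis then gives $e$-positivity for each $d_{P_i}(1)|_{q\to q+1}$, and non-negativity of the coefficients $c_i(q+1)$ makes $d_P(1)|_{q\to q+1}$ $e$-positive.

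The hard part will be the case analysis for $s\geq 2$: one must verify for each branch \eqref{eq:case0}--\eqref{eq:lem_case3} that the resulting path genuinely lies in $\mathcal{P}$ (all path-degrees remain $\geq 0$ and no diagonal step lands on the main diagonal) and that $\mu$ strictly decreases. These verifications are conceptually routine once one tracks how path-degrees change when a $d_+$ migrates leftward or a $d_-d_+$ collapses to a $\varphi$, but they must be carried out branch by branch.
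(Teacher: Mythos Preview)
Your proposal is correct and follows essentially the same approach as the paper: both arguments repeatedly apply Lemma~\ref{lem:crux} to the prefix ending at the leftmost $d_+$ that has not yet reached degree~$0$, and peel off factors $d_-\varphi^{m}d_+$ as multiplications by $e_{m+1}$ via Lemma~\ref{lem:ekoperator}. The only difference is cosmetic: where the paper writes an informal ``iterate this process until all $d_+$'s have degree~$0$'', you package the same iteration as a strong induction on the lexicographic pair $\mu(d_P)=(|P|,\,p(d_P))$, which makes the termination explicit.
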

\begin{proof}
Let $P\in \mathcal{P}$, and consider its decomposition $P=P_1P_2\cdots P_k$ in the concatenation of non-touching paths $P_i$. Clearly $d_P=d_{P_1}d_{P_2}\cdots d_{P_k}$. Consider the leftmost $d_+$ of degree $\geq 1$ in $d_P$, and suppose that it occurs in $d_{P_i}$ with $1\leq i\leq k$. By construction we have that all the $d_{P_j}$ with $j<i$ are of the form $d_-\varphi^{m_j}d_+$ for some $m_j\geq 0$. The prefix of $d_{P_i}$ that is cut by our $d_+$ (included) is a partial path operator of the form in the hypothesis of Lemma~\ref{lem:crux}. Therefore we can apply the lemma to make a substitution and replace our $d_P$ with a sum of one or two new path operators, each multiplied by a coefficient in $\{1,q,q-1\}$. The new operator paths that occur all have one of the following three properties:
\begin{enumerate}
	\item there is one $d_+$ less occurring, as it got replaced together with a $d_-$ by a $\varphi$;
	
	\item the $d_+$ occurs one place to the left of what it used to, in the same degree, as it passed by a $\varphi$;
	
	\item the $d_+$ occurs in a lower degree, as it passed by a $d_-$.
\end{enumerate}
Notice that nothing before $d_{P_i}$ and nothing after our $d_+$ got modified in our new path operators.

Therefore, iterating this process on the new paths operators, in finitely many steps either our $d_+$'s disappear as in (1), or they reach degree $0$, in which case they cut a prefix of the form $d_-\varphi^{m_i}d_+$ with $m_i\geq 0$.

Hence, until among our path operators there are $d_+$'s of degree $\geq 1$, we can iterate this all process.

At the end, in finitely many steps, we will have reached a combination of path operators in which every $d_+$ has degree $0$, i.e.\ these operators are compositions of operators of the form $d_-\varphi^{m}d_+$ with $m\geq 0$, with coefficients in $\mathbb{N}[q,q-1]=\mathbb{N}[q-1]$.

By Lemma~\ref{lem:ekoperator} the operators of the form $d_-\varphi^{m}d_+$ with $m\geq 0$ are multiplications by $e_{m+1}$, so in the end, applying the resulting formula for $d_P$ to the constant $1$ we ended up expanding the resulting symmetric function in the elementary symmetric function basis $\{e_\mu\}_\mu$ with coefficients in $\mathbb{N}[q-1]$. The substitution $q\mapsto q+1$ will therefore result in the $e$-positivity, as claimed.
\end{proof}
\begin{remark}
Observe that our proof gives actually an algorithm to expand $d_P(1)$ (and hence $\left.d_P(1)\right|_{q=q+1}$) in the elementary symmetric function basis.
\end{remark}
\begin{example}
Consider the path $P\in \mathcal{P}$ encoded by $(-,0,-,0,+,+)$. We start with the leftmost $d_+$, which appears in degree $1$: we compute
\begin{align*}
	d_P & = d_- \varphi  d_- \varphi d_+^2\\
	\text{(using \eqref{eq:auxdplus})} & = d_- \varphi  d_-  \textcolor{blue}{T_1d_+ \varphi} d_+ +\textcolor{blue}{(q-1)} d_- \varphi  d_-  \textcolor{blue}{d_+ \varphi} d_+ \\
	\text{(using \eqref{eq:auxdminus})} & = d_-   \textcolor{blue}{d_- \varphi} d_+ \varphi  d_+ \textcolor{blue}{-(q-1)} d_- \textcolor{blue}{\varphi  d_-}   d_+ \varphi  d_+ + (q-1)  d_- \varphi  d_-  d_+ \varphi  d_+ \\
	& = d_-^2 \varphi d_+ \varphi  d_+
\end{align*}
which is the case \eqref{eq:case2} with $\epsilon=0$. We can iterate on the leftmost $d_+$, which is still in degree $1$, to get
\begin{align*}
	d_P& = d_-^2 \varphi d_+ \varphi  d_+ \\
	\text{(using \eqref{eq:auxdplus})}& = d_-^2 \textcolor{blue}{T_1 d_+ \varphi} \varphi  d_+ + \textcolor{blue}{(q-1)}d_-^2 \textcolor{blue}{d_+ \varphi} \varphi  d_+\\
	\text{(using \eqref{eq:dm2})}& = d_-^2 d_+ \varphi^2  d_+ + (q-1) d_-^2 d_+ \varphi^2  d_+\\
	& =q d_-^2 d_+ \varphi^2  d_+ 
\end{align*}
which is the case \eqref{eq:case1}.

Iterating again on the leftmost $d_+$, which is in degree $1$, we get
\begin{align*}
	d_P&=q d_-^2 d_+ \varphi^2  d_+ \\
	\text{(using \eqref{eq:phi_def})}& =q d_- \textcolor{blue}{d_+ d_-} \varphi^2  d_+ + q\textcolor{blue}{(q-1)} d_-\textcolor{blue}{\varphi} \varphi^2  d_+\\
	& =q d_- d_+ d_-  \varphi^2  d_+ + q (q-1)  d_- \varphi^3  d_+,
\end{align*}
which is the case \eqref{eq:case0}.

Now all the $d_+$'s appear in degree $0$, so, applying Lemma~\ref{lem:ekoperator}, we finally get
\begin{align*}
	d_P(1)&  =q d_- d_+ d_-  \varphi^2  d_+(1) + q (q-1)  d_- \varphi^3  d_+(1) \\
	& =qe_1e_3+q(q-1)e_4.
\end{align*}
\end{example}

The following corollary is now immediate.
\begin{corollary}
	If $G_{\mathbf{\nu}}[X;q]$ is a vertical strip LLT polynomial, then $G_{\mathbf{\nu}}[X;q+1]$ is $e$-positive.
\end{corollary}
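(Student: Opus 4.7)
The plan is essentially to chain together the two main ingredients already established in the paper. By Theorem~\ref{thm:CM_LLT}, every vertical strip LLT polynomial can be written as $G_{\mathbf{\nu}}[X;q] = d_{P_{\mathbf{\nu}}}(1)$ for the path $P_{\mathbf{\nu}} \in \mathcal{P}$ built from $\mathbf{\nu}$ by the Schröder-path construction of Section~\ref{Sec:Schroeder_LLT}. Then Theorem~\ref{thm:main} applies directly to the path operator $d_{P_{\mathbf{\nu}}}$ and says precisely that $\left. d_{P_{\mathbf{\nu}}}(1)\right|_{q=q+1}$ is $e$-positive. Substituting back, this is $G_{\mathbf{\nu}}[X;q+1]$.

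Concretely, the argument would be: first invoke Theorem~\ref{thm:CM_LLT} to get the identity
\[
G_{\mathbf{\nu}}[X;q+1] = \left. d_{P_{\mathbf{\nu}}}(1) \right|_{q = q+1},
\]
where the right-hand side is interpreted as the symmetric function obtained by performing the substitution $q \mapsto q+1$ in the coefficients of the expansion of $d_{P_{\mathbf{\nu}}}(1) \in \Lambda$. Second, apply Theorem~\ref{thm:main} to the path $P_{\mathbf{\nu}} \in \mathcal{P}$ (which is a legal input since the construction of Section~\ref{Sec:Schroeder_LLT} produces exactly an element of $\mathcal{P}$, as was observed after Figure~\ref{fig:LLT2}); this yields that this expression lies in the $\mathbb{N}[q,t]$-span of the $e_\mu$'s.

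There is really no obstacle here: all the combinatorial and algebraic content is already contained in the theorem of Carlsson–Mellit and in Theorem~\ref{thm:main}, whose proof exhibits the algorithm that expresses $d_{P}(1)$ in the elementary basis with coefficients in $\mathbb{N}[q-1]$. The corollary is thus a one-line deduction, and in fact the same argument gives a constructive expansion of $G_{\mathbf{\nu}}[X;q+1]$ in the $e$-basis, obtained by running the algorithm of Theorem~\ref{thm:main} on $P_{\mathbf{\nu}}$ and then substituting $q \mapsto q+1$ at the end.
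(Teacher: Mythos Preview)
Your proposal is correct and follows exactly the same approach as the paper: invoke Theorem~\ref{thm:CM_LLT} to write $G_{\mathbf{\nu}}[X;q]=d_{P_{\mathbf{\nu}}}(1)$, then apply Theorem~\ref{thm:main} to $P_{\mathbf{\nu}}\in\mathcal{P}$. The paper's own proof is the single sentence ``the statement follows by combining Theorem~\ref{thm:CM_LLT} and the previous theorem,'' so your version is simply a more detailed unpacking of that same deduction.
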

\begin{proof}
	The statement follows by combining Theorem~\ref{thm:CM_LLT} and the previous theorem.
\end{proof}

\section{Some consequences}

As we already mentioned in the introduction, our result implies several $e$-positivity conjectured in \cite{Alexandersson_Panova_cycles}, \cite{Bergeron_Open_Questions} and \cite{Garsia_Haglund_Qiu_Romero}. For example, we have the following corollary.  
\begin{corollary}
All the conjectured $e$-positivities left open in the introduction of \cite{Garsia_Haglund_Qiu_Romero} (the (7) conditioned to the validity of the Delta conjecture) are true.
\end{corollary}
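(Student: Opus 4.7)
The strategy is uniform: reduce each of the open conjectures (1)--(7) in the introduction of \cite{Garsia_Haglund_Qiu_Romero} to the previous Corollary, via known vertical-strip LLT expansions of the symmetric functions involved. In every case the conjectured $e$-positive object $F[X;q+1,t]$ arises from an $F[X;q,t]$ that admits an expansion
\[
F[X;q,t] = \sum_{\mathbf{\nu}} c_{\mathbf{\nu}}(q,t)\, G_{\mathbf{\nu}}[X;q],
\]
where $\mathbf{\nu}$ ranges over tuples of vertical strips and $c_{\mathbf{\nu}}(q,t)\in \mathbb{N}[q,t]$. Such expansions are exactly what the Haglund--Haiman--Loehr-type combinatorial formulas attached to modified Macdonald polynomials and the various $\Delta$ operators produce throughout \cite{Garsia_Haglund_Qiu_Romero}; each of the seven symmetric functions on the list is manifestly a positive sum of vertical strip LLT polynomials.

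Given such an expansion, the substitution $q\mapsto q+1$ yields
\[
F[X;q+1,t] = \sum_{\mathbf{\nu}} c_{\mathbf{\nu}}(q+1,t)\, G_{\mathbf{\nu}}[X;q+1],
\]
and since $q+1$ has nonnegative integer coefficients in $q$, each $c_{\mathbf{\nu}}(q+1,t)$ remains in $\mathbb{N}[q,t]$. By the previous Corollary every $G_{\mathbf{\nu}}[X;q+1]$ is $e$-positive, so the right-hand side is an $\mathbb{N}[q,t]$-linear combination of $e$-positive symmetric functions, hence itself $e$-positive. This template is the entire engine of the proof.

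To finish, I would walk through conjectures (1)--(6) in the order in which they appear in the introduction of \cite{Garsia_Haglund_Qiu_Romero}, pointing in each case to the reference supplying the relevant vertical-strip LLT expansion and verifying that only vertical strips occur. For conjecture (7), which concerns the Delta conjecture symmetric function $\Delta'_{e_k}e_n$, the required $\mathbb{N}[q,t]$-positive vertical-strip LLT expansion is provided by the combinatorial (Schr\"{o}der) side of the Delta conjecture itself, which accounts for the conditional phrasing. The main obstacle is not mathematical but bookkeeping: one has to check, conjecture by conjecture, that the known expansions involve only tuples of vertical strips rather than more general skew shapes, and that the scalar coefficients are genuinely in $\mathbb{N}[q,t]$ and not merely in $\mathbb{Q}(q,t)$. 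Once these verifications are carried out, the corollary follows immediately from the reduction above with no further combinatorics required.
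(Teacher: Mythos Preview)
Your proposal is correct and follows essentially the same approach as the paper's own (sketch of) proof: both reduce the corollary to the observation that every symmetric function on the list in \cite{Garsia_Haglund_Qiu_Romero} is an $\mathbb{N}[q,t]$-combination of vertical strip LLT polynomials, so that the previous Corollary applies term by term. Your version is more explicit about the mechanism (including the observation that $c_{\mathbf{\nu}}(q+1,t)\in\mathbb{N}[q,t]$ and the conditional role of the Delta conjecture for item (7)), but the underlying argument is the same.
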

\begin{proof}[Sketch of the proof]
All the occurring symmetric functions are either (special cases of) vertical strip LLT polynomials or positive combinations (i.e.\ with coefficients in $\mathbb{N}[q,t]$) of them: see for example \cite{Haglund-Book-2008} and \cite{BGSX_Rational_Shuffle} for more informations on these decompositions.
\end{proof}

\medskip

In a different direction, the following corollary is an immediate consequence of the proof of Theorem~\ref{thm:main}.
\begin{corollary}
	The path operator $d_P$ is simply a multiplication operator by $d_P(1)$.
\end{corollary}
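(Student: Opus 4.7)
The plan is to simply unpack what the proof of Theorem~\ref{thm:main} already establishes. That proof constructs an algorithm which, starting from $d_P$, iteratively applies the identities of Lemma~\ref{lem:crux} to the leftmost $d_+$ of positive degree and produces, in finitely many steps, a $\mathbb{N}[q-1]$-linear combination of path operators in which every remaining $d_+$ has degree $0$. I would observe that such a ``terminal'' path operator necessarily decomposes as a composition of blocks $d_-\varphi^{m}d_+$ with $m\geq 0$, each of which is an operator $\Lambda\to\Lambda$.

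More precisely, once every $d_+$ has degree $0$, the word encoding the underlying path breaks uniquely into non-touching sub-paths of the form $(-,0^{m},+)$: this is forced by the degree bookkeeping, since the domain $V_k$ of any prefix is determined by the difference between the number of minuses and pluses so far. Consequently each such terminal operator factors as $d_-\varphi^{m_1}d_+\circ d_-\varphi^{m_2}d_+\circ\cdots\circ d_-\varphi^{m_r}d_+$, with $m_1,\dots,m_r\geq 0$, and each factor is an endomorphism of $\Lambda=V_0$. Applying Lemma~\ref{lem:ekoperator} identifies the $i$-th factor with multiplication by $e_{m_i+1}$.

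To conclude, I would invoke the elementary observation that multiplication operators on the commutative algebra $\Lambda$ are closed under composition and $\mathbb{Q}(q,t)$-linear combinations: $M_f\circ M_g=M_{fg}$ and $aM_f+bM_g=M_{af+bg}$. Combining this with the previous step, the output of the algorithm presents $d_P$ as the operator of multiplication by a single symmetric function $f\in\Lambda$. Evaluating at the constant $1$ then yields $d_P(1)=f\cdot 1=f$, which identifies $f$ with $d_P(1)$ and gives the corollary. There is no genuine obstacle: the only content beyond Theorem~\ref{thm:main} is the recognition that the ``atoms'' produced by its algorithm are themselves multiplication operators, an immediate consequence of Lemma~\ref{lem:ekoperator}.
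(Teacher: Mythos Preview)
Your proposal is correct and follows exactly the paper's own argument: invoke the decomposition established in the proof of Theorem~\ref{thm:main} into compositions of operators $d_-\varphi^m d_+$, apply Lemma~\ref{lem:ekoperator} to identify each as a multiplication operator, and conclude by closure of multiplication operators under composition and linear combination. The paper's proof is simply a terser version of what you wrote.
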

\begin{proof}
In the proof of Theorem~\ref{thm:main} we showed that $d_P$ is a linear combination of compositions of path operators of the form $d_-\varphi^{m}d_+$ with $m\geq 0$, which are multiplication operators by Lemma~\ref{lem:ekoperator}. Therefore $d_P$ is itself a multiplication operator, obviously by $d_P(1)$.
\end{proof}
\begin{remark}
This fact is probably related to the same property conjectured by Bergeron in \cite{Bergeron_Open_Questions} for the specialization at $t=1$ of related symmetric functions. We do not discuss this further in this article.
\end{remark}
An interesting consequence of the last corollary, which is not at all clear from the definitions, is that the path operators actually commute with each others.

\section*{Acknowledgment}

We thank Adriano Garsia for attracting our attention to this topic in general, and to this specific problem in particular.

\bibliographystyle{alpha}
\bibliography{Bibliography}

\end{document}